\newtheorem{theorem}{Theorem}[section]
\newtheorem{lemma}[theorem]{Lemma}
\newtheorem{proposition}[theorem]{Proposition}
\newtheorem{corollary}[theorem]{Corollary}
\theoremstyle{definition}
\newtheorem{df}{Definition}
\newtheorem{example}[df]{Example}
\newtheorem{remark}[df]{Remark}
\newtheorem{problem}[df]{Problem}
\newcommand{\N}{\mathbb N}
\newcommand{\R}{\mathbb R}
\newcommand{\eL}{\mathcal{L}}
\newcommand{\F}{\mathcal{F}}
\subjclass[2020]{08B20,40A05,28A35} 
\keywords{Banach limits, porosity, algebrability, measure of families of sequences}
\begin{document}
\author{Piotr Nowakowski}
\address{Faculty of Mathematics and Computer Science, University of \L \'{o}d\'{z},
Banacha 22, 90-238 \L \'{o}d\'{z}, Poland \\
ORCID: 0000-0002-3655-4991}
\email{piotr.nowakowski@wmii.uni.lodz.pl}

\title{How large is the space of almost convergent sequences?}
\date{}

\begin{abstract}
We consider the subspaces $c$, $\widehat{c}$, $S$ of $\ell^\infty$, where $\widehat{c}$ consists of almost convergent sequences, and $S$ consists of sequences whose arithmetic means of consecutive terms are convergent. We know that $c\subset \widehat{c} \subset S$. We examine the largeness of $c$ in $\widehat{c}$, $\widehat{c}$ in $S$ and $S$ in $\ell^\infty$. We will do it from the viewpoints of porosity, algebrability and measure. 
\end{abstract}

\maketitle

\section{Introduction}
We say that a linear functional $L\colon \ell^{\infty} \to \R$ is a Banach limit if for any sequence $(x_n) \in \ell^{\infty}$ the following conditions hold (see \cite{S}):

1) $\forall_{n\in\N}\,\, x_n \geq 0 \Rightarrow L((x_n))\geq 0;$

2) $L(T((x_n))) = L((x_n))$, where $T((x_n)) = (x_2,x_3, \dots)$;

3) $L((1,1,\dots)) = 1$.\\
When we write that some number $b$ is a Banach limit of a sequence $(x_n)$, we mean that there is a Banach limit $L\colon \ell^{\infty} \to \R$ such that $L((x_n))=b$. Banach limits are widely investigated by many authors in various areas of mathematics (e.g. \cite{AM}, \cite{P}, \cite{FL}). Recently, also the monograph \cite{DN} concerning Banach limits was published.

It can be easily proved that if a sequence is convergent, then every Banach limit on this sequence is equal to the classical limit of the sequence. It is natural to ask about the existence of non-convergent sequences with such a property, that is,  which have a unique Banach limit. The answer is positive. For example, every Banach limit on the sequence $(-1)^n$ is equal to $0$. The main goal of this paper is to check how large is the set of such sequences with a unique Banach limit.

Define $$\widehat{c}: = \{x \in \ell^{\infty} \colon \exists_{s \in \R} \; L(x) = s \mbox{ for any Banach limit } L\}$$ and 
$$S:=\{x \in \ell^{\infty} \colon \lim\limits_{n\to \infty} \frac{x_1+\dots+x_n}{n} \mbox{ exists}\}.$$
Both sets are linear subspaces of $\ell^\infty$. 

The existence of sequences with unique Banach limits was observed by Lorentz in \cite{L}. They are called almost convergent sequences. There are many researchers working on almost convergent sequences (see e.g. \cite{CG}, \cite{D}, \cite{MV})

Lorentz proved the following theorem.
\begin{theorem} \label{t1} \cite{L}
Let $x \in \ell^{\infty}$, $s \in \R$. The following conditions are equivalent:

i) $x \in \widehat{c}$ and $L(x) = s$ for every Banach limit $L$;

ii) $\lim\limits_{n \to \infty} \frac{x_j+\dots+x_{j+n-1}}{n} = s$ uniformly with respect to $j$. 
\end{theorem}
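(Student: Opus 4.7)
The plan is to prove each implication directly, with the hard work concentrated in the reverse direction, where we must manufacture a Banach limit that witnesses the failure of i) whenever ii) fails. The central tool is the sublinear functional
\[ p(z) := \lim_{n\to\infty} \sup_{j \in \N} \frac{z_j + z_{j+1} + \cdots + z_{j+n-1}}{n}, \]
whose existence follows from Fekete's subadditive lemma applied to $u_n(z) := \sup_j (z_j + \cdots + z_{j+n-1})$: the inequality $u_{n+m}(z) \le u_n(z) + u_m(z)$ gives $p(z) = \inf_n u_n(z)/n$, so $p$ is real-valued and sublinear on $\ell^\infty$.

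For ii)$\Rightarrow$i), I would fix an arbitrary Banach limit $L$ and use shift-invariance plus linearity to rewrite
\[ L(x) = L\!\left( \tfrac{1}{n}(x + Tx + \cdots + T^{n-1}x) \right), \]
whose $j$-th coordinate equals $y_j^{(n)} := (x_j + \cdots + x_{j+n-1})/n$. The axioms force $|L(z)| \le \|z\|_\infty$ (apply positivity to $\|z\|_\infty \mathbf{1} \pm z$ and use $L(\mathbf{1}) = 1$), so uniform convergence $\sup_j |y_j^{(n)} - s| \to 0$ forces $L(x) = s$.

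For i)$\Rightarrow$ii), I would argue by contraposition. Negating uniform convergence produces $\ve > 0$, $n_k \to \infty$, and indices $(j_k)$ with, after passing to a subsequence, either $y_{j_k}^{(n_k)} \ge s + \ve$ for all $k$ or $y_{j_k}^{(n_k)} \le s - \ve$ for all $k$; the second case reduces to the first by replacing $x$ with $-x$ and $s$ with $-s$. Under the first alternative $u_{n_k}(x)/n_k \ge s + \ve$, and since $u_n(x)/n$ converges by Fekete, $p(x) \ge s + \ve$. I would then extend the linear map $\alpha x \mapsto \alpha p(x)$ from $\R x$ to a linear functional $L \colon \ell^\infty \to \R$ dominated by $p$ via Hahn--Banach; compatibility $\alpha p(x) \le p(\alpha x)$ on $\R x$ follows from positive homogeneity for $\alpha \ge 0$ and from the inequality $p(x) + p(-x) \ge 0$ (a consequence of $p(0) = 0$ and subadditivity) for $\alpha < 0$.

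The main obstacle is verifying that the extension $L$ really is a Banach limit. I would check three properties. \emph{Positivity}: for $z \ge 0$ each term of $u_n(-z)$ is nonpositive, so $p(-z) \le 0$, and hence $L(-z) \le p(-z) \le 0$ gives $L(z) \ge 0$. \emph{Normalization}: $u_n(\pm \mathbf{1}) = \pm n$ yields $p(\mathbf{1}) = 1$ and $p(-\mathbf{1}) = -1$, pinching $L(\mathbf{1}) = 1$. \emph{Shift-invariance}: the telescoping identity $\sum_{i=0}^{n-1}(Tz - z)_{j+i} = z_{j+n} - z_j$ gives $u_n(Tz - z) \le 2\|z\|_\infty$, so $p(Tz - z) \le 0$ and likewise $p(z - Tz) \le 0$, forcing $L(Tz) = L(z)$. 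Consequently $L$ is a Banach limit with $L(x) = p(x) \ge s + \ve > s$, contradicting i).
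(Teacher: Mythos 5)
The paper does not prove this theorem at all: it is stated as a cited result of Lorentz \cite{L}, so there is no in-paper argument to compare yours against. Your proof is correct and self-contained, and it is essentially the classical argument (the one in Sucheston's paper \cite{S}, which the author cites for the definition of Banach limits): introduce the sublinear functional $p(z)=\lim_n \sup_j \frac{1}{n}(z_j+\dots+z_{j+n-1})$, whose existence follows from Fekete applied to $u_n(z)=\sup_j(z_j+\dots+z_{j+n-1})$, and use Hahn--Banach to produce a Banach limit attaining the value $p(x)$. All the key verifications are present and correct: the bound $|L(z)|\le \|z\|_\infty$ from positivity and normalization, which together with $L(x)=L\bigl(\tfrac{1}{n}(x+Tx+\dots+T^{n-1}x)\bigr)$ gives ii)$\Rightarrow$i); the compatibility condition $\alpha p(x)\le p(\alpha x)$ on $\R x$ via $p(x)+p(-x)\ge p(0)=0$; and the three Banach-limit axioms for the extension, with shift-invariance coming from the telescoping bound $u_n(Tz-z)\le 2\|z\|_\infty$. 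The only point worth making explicit is that in the contrapositive you conclude from $u_{n_k}(x)/n_k\ge s+\ve$ along a subsequence that $p(x)\ge s+\ve$; this is fine precisely because Fekete guarantees the full sequence $u_n(x)/n$ converges, so the subsequential bound passes to the limit. Your argument in fact yields the stronger statement that the set of values of Banach limits at $x$ fills the interval between $\lim_n\inf_j$ and $\lim_n\sup_j$ of the averaged blocks, which is the content of the Remark following the theorem.
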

\begin{remark}
Condition ii) is equivalent to:
$$ \lim\limits_{n \to \infty}\left(\sup_j \left\{\frac{x_j+\dots+x_{j+n-1}}{n}\right\}\right) = \lim\limits_{n \to \infty}\left(\inf_j \left\{\frac{x_j+\dots+x_{j+n-1}}{n}\right\}\right)=s.$$
\end{remark}
\begin{corollary}
$\widehat{c} \subset S$ and for every $x \in \widehat{c}$ and every Banach limit $L$ we have $$L(x) = \lim\limits_{n\to \infty} \frac{x_1+\dots+x_n}{n}.$$
\end{corollary}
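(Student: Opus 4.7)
The plan is to derive both assertions of the corollary directly from Lorentz's theorem (Theorem \ref{t1}), without any additional machinery.

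Fix $x \in \widehat{c}$. By definition of $\widehat{c}$, there exists $s \in \R$ such that $L(x) = s$ for every Banach limit $L$, so the hypothesis of condition i) of Theorem \ref{t1} is satisfied. The theorem therefore gives us condition ii): the Cesàro-type averages
\[
\frac{x_j + x_{j+1} + \dots + x_{j+n-1}}{n}
\]
converge to $s$ as $n \to \infty$, uniformly with respect to $j$.

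The next step is simply to specialize this uniform convergence to the index $j = 1$, which yields
\[
\lim_{n\to\infty} \frac{x_1 + x_2 + \dots + x_n}{n} = s.
\]
In particular, the limit defining membership in $S$ exists, so $x \in S$; this establishes the inclusion $\widehat{c} \subset S$. Moreover, since $s = L(x)$ for every Banach limit $L$, the same equality gives $L(x) = \lim_{n\to\infty} \frac{x_1 + \dots + x_n}{n}$, which is the second assertion.

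There is no real obstacle here: the corollary is essentially the $j=1$ instance of Lorentz's uniform condition, combined with the remark that this uniform limit is the common Banach-limit value of $x$. The only thing to be careful about is making explicit that one is allowed to plug a single value of $j$ into a statement about uniform convergence in $j$, which is of course trivial.
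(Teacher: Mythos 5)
Your proposal is correct and is exactly the intended argument: the paper states this as an immediate corollary of Theorem \ref{t1} without written proof, and specializing the uniform limit in condition ii) to $j=1$ is precisely what is meant. Nothing is missing.
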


One may ask, if the inclusion $\widehat{c} \subset S$ is proper. The following example shows that the answer is positive.

\begin{example}
Let $x = (x_n)$ be defined in the following way: put $m_0 := 0$ and $m_j := m_{j-1}+j+2^j$ for $j \in \N$. Then for $n \in \N$ we define
$$x_n := \left\{ \begin{array}{ccc}
0\;\text{ if }\; n \in \{m_{j-1}+1, m_{j-1}+2, \dots, m_{j-1}+j\} \\ 
1 \;\text{ for the remaining }n,%
\end{array}%
\right. $$
that is, $x=(0,1,1,0,0,1,1,1,1,0,0,0,1,1,1,1,1,1,1,1,0\dots)$. Then in the set $\{m_{j-1}+1, \dots, m_j\}$ we have exactly $j$ zeros and $2^j$ ones. If $n \in \{m_{j-1}+1, \dots, m_j\}$, then
$$1 \geq \frac{x_1+x_2+\dots+x_n}{n} \geq \frac{\sum_{i=1}^{j-1}2^i}{m_{j-1} + j} = \frac{\sum_{i=1}^{j-1}2^i}{\sum_{i=1}^{j-1}2^i + \sum_{i=1}^j i} = \frac{2^j-2}{2^{j}-2+\frac{j^2+j}{2}}. $$
Observe that if $n$ tends to infinity, so does $j$. Thus, the number $\frac{2^j-2}{2^{j}-2+\frac{j^2+j}{2}}$ converges to $1$ as $n$ tends to infinity.
Hence $\lim\limits_{n\to \infty} \frac{x_1+\dots+x_n}{n} = 1,$ so $x \in S$. 
Similarly, for any $j \in \N$, $\lim\limits_{n\to \infty} \frac{x_j+\dots+x_{n-1+j}}{n} = 1.$
On the other hand, for any $n \in \N$ we can find $j \in \N$ such that $\frac{x_j+\dots+x_{n-1+j}}{n} = 0.$ So, 
$\lim\limits_{n \to \infty}\left(\inf_j \left\{\frac{x_j+\dots+x_{j+n-1}}{n}\right\}\right)=0.$ Thus, despite the fact that for all  $j \in \N$, $\lim\limits_{n\to \infty} \frac{x_j+\dots+x_{n-1+j}}{n} = 1,$ this convergence is not uniform with respect to $j$. By Theorem \ref{t1}, $x \notin \widehat{c}$. 
Therefore, $\widehat{c} \subsetneq S$.
\end{example}

Consider the spaces $c$, $c_0$, $\widehat{c_0}$ and $S_0$ contained in $\ell^{\infty}$ of sequences which are convergent, convergent to zero, almost convergent to $0$, with arithmetic means convergent to zero, respectively. We have
$c_0 \subsetneq c \subsetneq \widehat{c}$, $c_0 \subsetneq \widehat{c_0}\subsetneq S_0 \subsetneq S$ and $\widehat{c_0} \subsetneq \widehat{c}$.

\begin{problem}
How large is the space $c$ in $\widehat{c}$ and $\widehat{c}$ in $S$? Similarly, how large is $c_0$ in $\widehat{c_0}$ and $\widehat{c_0}$ in $S_0$?
\end{problem}

In the paper, we try to solve this problem looking at it from different viewpoints. In section 2. we check whether the considered spaces are porous in each other. In section 3. we examine the algebrability of the spaces $\widehat{c} \setminus c$, $S \setminus \widehat{c}$ and $\ell^{\infty} \setminus S$. In section 4. we focus on measure of the considered families of sequences.

\section{Porosity of considered spaces}
Let us recall the notions of porous sets in a metric space (see \cite{Za}, \cite{Za1}).
Let $(X,d)$ be a metric space. For $x\in X$ and $r > 0$ we write 
$$B(x,r) = \{y\in X\colon d(x,y) < r\}.$$
For $E\subset X$, $x \in X$ and $R >0$, we set $\gamma(x,R,E):=\sup\{r > 0 \colon \exists\,{z \in X} \,\,(B(z,r) \subset B(x,R)\setminus E)\}$. 
The porosity of $E$ at $x$ is defined as
\begin{equation*} \label{por}
p(E,x):=2 \limsup\limits_{r \to 0^+}  \frac{\gamma(x,r,E)}{r}.
\end{equation*}
We say that a set $E$ is porous if its porosity is positive at each $x \in E$, and $E$ is strongly porous if its porosity is equal to $1$ at each $x \in E$. It is well known that every porous set is nowhere dense. We may also consider the lower porosity, where we replace $\limsup$ in the definition of the porosity by $\liminf$. We then define lower porosity and strong lower porosity analogously as porosity and strong porosity.

We may look at porous sets as "small sets". In this section we check porosity properties of the considered families of sequences. Examining whether some normed space is porous in another one is not a new idea. For example, this has been done recently in \cite{B}.

First, we need to prove the following easy proposition.
\begin{proposition} \label{dom}
The spaces $\widehat{c}, \widehat{c_0}, S$ and $S_0$ are closed in $\ell^\infty$.
\end{proposition}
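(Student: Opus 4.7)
The plan is to exploit the fact that all four spaces are defined by the convergence of certain Cesàro-type averages, and that these averages depend in a $1$-Lipschitz way on the underlying sequence in the sup norm. Concretely, for $x \in \ell^\infty$, $n \in \N$ and $j \in \N$, write
$$\sigma_n(x) := \frac{x_1+\dots+x_n}{n}, \qquad \sigma_{n,j}(x) := \frac{x_j+\dots+x_{j+n-1}}{n}.$$
The key observation is that $|\sigma_n(x) - \sigma_n(y)| \leq \|x-y\|_\infty$ and $|\sigma_{n,j}(x)-\sigma_{n,j}(y)| \leq \|x-y\|_\infty$ for \emph{all} $n$ and $j$.

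For $S$, suppose $(x^{(k)})_{k\in\N}$ is a sequence in $S$ converging to some $x \in \ell^\infty$. Writing $s_k := \lim_n \sigma_n(x^{(k)})$, the inequality above gives $|s_k - s_l| \leq \|x^{(k)} - x^{(l)}\|_\infty$, so $(s_k)$ is Cauchy and converges to some $s \in \R$. Given $\ve > 0$, I would pick $k$ large enough that $\|x - x^{(k)}\|_\infty < \ve/3$ and $|s_k - s| < \ve/3$, then pick $N$ with $|\sigma_n(x^{(k)}) - s_k| < \ve/3$ for $n \geq N$. The triangle inequality then gives $|\sigma_n(x) - s| < \ve$ for $n \geq N$, so $x \in S$. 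The argument for $S_0$ is identical with $s_k = s = 0$.

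For $\widehat{c}$, I would invoke Lorentz's theorem (Theorem \ref{t1}) to replace membership in $\widehat{c}$ by \emph{uniform} convergence of $\sigma_{n,j}(x)$ in $j$ as $n\to\infty$. The same Cauchy argument as above works verbatim, because the uniform $1$-Lipschitz bound $|\sigma_{n,j}(x)-\sigma_{n,j}(x^{(k)})| \leq \|x-x^{(k)}\|_\infty$ holds with the same constant for every $j$. Thus, given $\ve > 0$, choosing $k$ and then $N$ (where now $|\sigma_{n,j}(x^{(k)})-s_k| < \ve/3$ holds for $n\geq N$ \emph{and all} $j$, by Theorem \ref{t1}) yields $|\sigma_{n,j}(x)-s| < \ve$ for all $j$ and all $n \geq N$. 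Again, $\widehat{c_0}$ follows with $s=0$.

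There is really no conceptual obstacle here: the whole proof is a standard interchange-of-limits / Moore--Osgood style argument, and the only thing to be careful about is that the Lipschitz estimate on the partial averages is uniform in the index $j$ (so that the uniform convergence in $j$ is preserved in the limit). The uniformity in $j$ is the only reason why $\widehat{c}$ and $\widehat{c_0}$, as opposed to some weaker space of ``pointwise in $j$'' almost convergent sequences, are closed.
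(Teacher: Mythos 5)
Your proof is correct. For $S$ and $S_0$ it is essentially the paper's argument with the details written out: the paper passes from $x^{(k)}\to x$ in $\ell^\infty$ to convergence of the sequences of Ces\`aro averages and then invokes the closedness of $c$ inside $\ell^\infty$, which packages exactly your $1$-Lipschitz estimate and $\varepsilon/3$ computation. For $\widehat{c}$ and $\widehat{c_0}$, however, you take a genuinely different route. The paper does not use Lorentz's characterization here at all: it observes that every Banach limit $L$ is a bounded, hence continuous, linear functional, so if $x^{(k)}\to x$ in $\ell^\infty$ and $y_k$ denotes the common value of all Banach limits on $x^{(k)}$, then $L(x)=\lim_k L(x^{(k)})=\lim_k y_k$ for every Banach limit $L$; since the right-hand side is independent of $L$, the limit $x$ again has a unique Banach limit. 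That argument is shorter and needs no estimates, but it is tied to the definition of $\widehat{c}$ via continuous functionals and therefore forces a separate argument for $S$. Your uniform-Lipschitz, Moore--Osgood style argument treats all four spaces by a single mechanism, and it correctly isolates the one point that genuinely requires care: the bound $|\sigma_{n,j}(x)-\sigma_{n,j}(y)|\le\|x-y\|_\infty$ holds with a constant independent of $j$, so uniform convergence in $j$ survives the passage to the limit. Both proofs are complete and valid.
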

\begin{proof}
Let $(x^n)_{n\in\N}$ be a convergent sequence in $\widehat{c}$. Denote by $x$ its limit in $\ell^\infty$. Let $L\colon \ell^\infty\to \R$ be a Banach limit. For $n\in \N$ denote by $y_n$ a unique Banach limit of a sequence $x^n$. Then also $L(x^n)=y_n$ for any $n \in \N$. Since $L$ is bounded linear operator it is continuous, thus $$L(x) = \lim\limits_{n\to \infty}L(x^n) = \lim\limits_{n\to \infty}L(y_n).$$ 
So, $x$ has a unique Banach limit, that is, $x \in \widehat{c}$. Therefore, $\widehat{c}$ is closed. We may similarly show the closedness of $\widehat{c_0}$.

Let $(x^n)=((x^n_i)_{i\in\N})_{n\in\N}$ be a convergent sequence in $S$. Denote by $x=(x_i)$ its limit in $\ell^\infty$. For any $n,i\in\N$ put
$s^n_i = \frac{x^n_1+\dots+x^n_i}{i}$ and
$s_i = \frac{x_1+\dots+x_i}{i}$. Since $\lim\limits_{n\to \infty} x^n_i = x_i$ for any $n\in\N$, we also have $\lim\limits_{n\to \infty} s^n_i = s_i$ for all $n$. By closedness of $c$, we have that a limit of the convergent sequence $(s^n)$ (that is, $(s_i)$) is also convergent, which proves that $x\in S$. Hence $S$ is closed. The proof for $S_0$ is analogous.
\end{proof}

Before proving the main result of this section, we need some definitions and theorems from the paper \cite{S}.

For the rest of this section we assume that $(X,||\cdot||)$ is a normed space. For $M\subset X$ we denote by $conv(M)$ a convex hull of $M$.

We say that $M\subset X$ is c-porous if for any $x\in X$ and every $r >0$, there are $y\in B(x,r)$ and non-zero continuous linear functional $\phi\colon X \to \R$ such that
$$\{z \in X\colon \phi(z) > \phi(y)\} \cap M = \emptyset.$$

\begin{proposition}\cite[Proposition 2.8.]{S} \label{2.8S}
If $M \subset X$ is c-porous, then for every $R>0$, $x \in X$ and $\alpha \in (0,1)$, there exists $y \in X$ such that $||y-x|| = R$ and $B(y,\alpha R)\cap M = \emptyset$.
\end{proposition}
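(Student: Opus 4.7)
The idea is that c-porosity gives us, near $x$, a half-space (defined by a functional $\phi$) that avoids $M$. To obtain the required ball $B(y,\alpha R)$ disjoint from $M$, the plan is to move from $x$ a distance of exactly $R$ in a direction $u$ which is nearly norm-attaining for $\phi$; the ball of radius $\alpha R$ around this new point will then fit inside the half-space, provided the various small parameters are calibrated compatibly.

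First I will fix a small parameter $\beta\in(0,1-\alpha)$, and concretely choose $\beta:=(1-\alpha)/2$, which has the convenient property that $1-\alpha-\beta=\beta$. Applying the c-porosity of $M$ at the point $x$ with radius $R\beta$, I obtain a point $y_0\in B(x,R\beta)$ and a non-zero continuous linear functional $\phi$ (which I normalize so that $\|\phi\|=1$) such that the open half-space $H:=\{z\in X\colon \phi(z)>\phi(y_0)\}$ is disjoint from $M$.

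Next, using $\|\phi\|=1$ I choose a unit vector $u\in X$ with $\phi(u)>1-\beta$, and set $y:=x+Ru$. Then automatically $\|y-x\|=R\|u\|=R$. The remaining step is to verify $B(y,\alpha R)\subset H$: for any $z$ in this open ball, $\phi(z)>\phi(y)-\alpha R$ (since $\|\phi\|=1$); then $\phi(y)>\phi(x)+R(1-\beta)$ by the choice of $u$; and on the other side $\phi(y_0)<\phi(x)+R\beta$ because $\|y_0-x\|<R\beta$. The identity $1-\beta-\alpha=\beta$ makes the two ends meet and yields $\phi(z)>\phi(y_0)$, so $z\in H$ and hence $z\notin M$.

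There is no real obstacle in this argument; it is a geometric bookkeeping exercise once one has the right picture in mind. The only point that requires care is the quantitative matching of the three small quantities — the displacement $\|y_0-x\|$, the angle defect $1-\phi(u)$, and the loss $\alpha R$ absorbed by the target ball — all of which must fit inside a common budget. Setting both $\|y_0-x\|$ and the angle defect to be at most $R\beta$ with $\beta=(1-\alpha)/2$ achieves this balance and closes the proof.
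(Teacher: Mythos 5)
Your proof is correct. Note that the paper itself gives no argument for this proposition: it is imported verbatim from the cited source (the c-porosity framework of Strobin), so there is no in-paper proof to compare against. Your half-space argument is the natural one and all the quantitative bookkeeping checks out: with $\beta=(1-\alpha)/2$ and $\|\phi\|=1$, for $z\in B(y,\alpha R)$ one gets $\phi(z)>\phi(y)-\alpha R>\phi(x)+R(1-\beta-\alpha)=\phi(x)+R\beta>\phi(y_0)$, so the ball lies in the $M$-avoiding half-space, while $y=x+Ru$ with $\|u\|=1$ gives $\|y-x\|=R$ exactly. The only steps worth flagging as implicitly used are that a non-zero functional can be normalized without changing the half-space, and that $\sup\{\phi(u)\colon\|u\|=1\}=\|\phi\|$ so a unit vector with $\phi(u)>1-\beta$ exists; both are immediate.
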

\begin{corollary}\label{wniosek por}
If $M \subset X$ is c-porous, then it is strongly lower porous. 
\end{corollary}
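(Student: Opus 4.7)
The plan is to feed Proposition \ref{2.8S} directly into the definition of lower porosity and show that the ratio $\gamma(x,r,M)/r$ stays above $1/2$ for \emph{every} sufficiently small $r$ (not merely along a sequence), which is precisely what lower porosity equal to $1$ demands.

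First I would fix $x \in M$ and an arbitrary $r>0$. Given $\alpha \in (0,1)$, I set $R = r/(1+\alpha)$ and apply Proposition \ref{2.8S} to obtain a point $y \in X$ with $\|y-x\| = R$ and $B(y,\alpha R) \cap M = \emptyset$. The simple triangle-inequality check shows that $B(y,\alpha R) \subset B(x,(1+\alpha)R) = B(x,r)$, so $B(y,\alpha R) \subset B(x,r) \setminus M$. By the definition of $\gamma$, this yields
\begin{equation*}
\gamma(x,r,M) \,\geq\, \alpha R \,=\, \frac{\alpha}{1+\alpha}\,r.
\end{equation*}
Since this inequality is valid for every $\alpha \in (0,1)$, letting $\alpha \to 1^{-}$ gives $\gamma(x,r,M)/r \geq 1/2$ for every $r>0$, hence
\begin{equation*}
2\liminf_{r\to 0^+} \frac{\gamma(x,r,M)}{r} \,\geq\, 1.
\end{equation*}

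To finish I would observe the matching upper bound: if $x \in M$ and $B(z,r') \subset B(x,r) \setminus M$, then $x \notin B(z,r')$ forces $\|z-x\| \geq r'$, while $B(z,r') \subset B(x,r)$ forces $\|z-x\| + r' \leq r$; adding these yields $r' \leq r/2$. Consequently $\gamma(x,r,M) \leq r/2$, and the lower porosity at $x$ is at most $1$. Combined with the previous step, the lower porosity at $x$ is exactly $1$, and because $x \in M$ was arbitrary, $M$ is strongly lower porous.

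I do not anticipate any real obstacle here; the only delicate point is making sure that the ball $B(y,\alpha R)$ produced by Proposition \ref{2.8S} is realized as a witness for $\gamma$ at the enlarged radius $(1+\alpha)R$ rather than at $R$ itself, and that one can still let $\alpha \to 1^{-}$ inside the $\liminf$ because the estimate $\gamma(x,r,M)/r \geq \alpha/(1+\alpha)$ holds for \emph{all} $r>0$ with $\alpha$ fixed, not just along some sequence of $r$'s depending on $\alpha$.
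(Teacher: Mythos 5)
Your proposal is correct and follows essentially the same route as the paper: both apply Proposition \ref{2.8S} to produce a ball of radius $\alpha R$ inside $B(x,r)\setminus M$, let $\alpha \to 1^-$ to get $\gamma(x,r,M)\geq r/2$ for every $r>0$, and note the trivial upper bound $\gamma(x,r,M)\leq r/2$ since $x\in M$. The only cosmetic difference is your choice $R=r/(1+\alpha)$ versus the paper's $R=r/2$; both yield the same limit.
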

\begin{proof}
Let $x \in M$, $r >0$ and $\alpha \in (0,1)$. By Proposition \ref{2.8S}, there exists $y \in X$ such that $||y-x|| = \frac{r}{2}$ and $B(y,\alpha \frac{r}{2})\cap M = \emptyset$. Thus, 
$\gamma(x,r,M) \geq \alpha \frac{r}{2},$ and by arbitrariness of $\alpha$, we get $\gamma(x,r,M) \geq \frac{r}{2}.$ Since also $\gamma(x,r,M) \leq \frac{r}{2}$ (because $x\in M$), we have $\gamma(x,r,M) = \frac{r}{2}$. Hence lower porosity of $M$ at $x$ is equal to $$2\liminf\limits_{r\to 0^+} \frac{\gamma(x,r,E)}{r} = 1.$$ By the arbitrariness of $x$, $M$ is strongly lower porous.
\end{proof}
\begin{proposition}\cite[Proposition 2.5]{S} \label{prop2.5S}
$M \subset X$ is c-porous if and only if $conv(M)$ is nowhere dense.
\end{proposition}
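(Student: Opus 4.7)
The plan is to prove the two directions separately, using linearity of functionals for one direction and Hahn--Banach separation for the other.

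For the forward direction, suppose $M$ is c-porous; I would like to show that every open ball $B(x,r) \subset X$ contains an open ball disjoint from $\overline{conv(M)}$. Apply c-porosity to the ball $B(x,r/2)$: there exist $y\in B(x,r/2)$ and a nonzero continuous linear functional $\phi$ with $\phi(z)\leq \phi(y)$ for all $z\in M$. By linearity the same inequality holds on $conv(M)$, and by continuity of $\phi$ it extends to $\overline{conv(M)}$. Now pick $v\in X$ with $\|v\|=1$ and $\phi(v)>0$ (possible since $\phi\neq 0$), and set $y' := y + \varepsilon v$ with $\varepsilon<r/2$ small. Then $y'\in B(x,r)$ and $\phi(y') = \phi(y) + \varepsilon\phi(v)$ is strictly greater than $\sup_{z\in \overline{conv(M)}}\phi(z)$; by continuity of $\phi$, a sufficiently small ball $B(y',\rho)$ (with $\rho < \varepsilon\phi(v)/\|\phi\|$) stays inside the half-space $\{\phi > \phi(y)\}$ and is therefore disjoint from $\overline{conv(M)}$. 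This shows $\overline{conv(M)}$ has empty interior, i.e.\ $conv(M)$ is nowhere dense.

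For the converse, suppose $conv(M)$ is nowhere dense, fix $x\in X$ and $r>0$. Since $\overline{conv(M)}$ has empty interior, the ball $B(x,r)$ is not contained in $\overline{conv(M)}$, so I can pick $y\in B(x,r)\setminus \overline{conv(M)}$. The set $\overline{conv(M)}$ is closed and convex, so by the Hahn--Banach separation theorem there exist a nonzero continuous linear functional $\phi$ and $\alpha\in\R$ with $\phi(z)\leq \alpha<\phi(y)$ for every $z\in \overline{conv(M)}$. In particular, for every $z\in M$ we have $\phi(z) \le \alpha < \phi(y)$, so $\{z\in X\colon \phi(z)>\phi(y)\}\cap M=\emptyset$. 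This gives exactly the c-porosity of $M$ witnessed by $(y,\phi)$.

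The main conceptual step is the reduction to a closed convex set in the converse: passing from $M$ to $\overline{conv(M)}$ is essential in order to apply Hahn--Banach, and this is precisely what the nowhere denseness of $conv(M)$ (rather than of $M$) delivers. Besides this, the only mild technicalities are the $\varepsilon$-shift in the direction where $\phi$ increases in the forward direction, and choosing the radius $\rho$ small enough relative to $\|\phi\|$; neither of these presents a real obstacle.
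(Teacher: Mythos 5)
Your proof is correct. Note, however, that the paper does not prove this proposition at all: it is quoted verbatim from the literature (it is Proposition 2.5 of Strobin's paper on porosity of convex nowhere dense subsets of normed linear spaces; the label \cite{S} in the text, which points to Sucheston's note on Banach limits, appears to be a citation slip for \cite{St}). So there is no in-paper argument to compare against; your write-up simply supplies the standard proof that the source gives. Both directions are sound: the forward direction correctly observes that the half-space bound $\phi\leq\phi(y)$ passes from $M$ to $conv(M)$ by linearity and to its closure by continuity, and the $\varepsilon$-shift of $y$ in a direction where $\phi$ increases is exactly what is needed to exit the (possibly boundary-touching) set $\overline{conv(M)}$; the converse is the second geometric form of Hahn--Banach, applied after the essential reduction to the closed convex set $\overline{conv(M)}$. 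The only degenerate case left implicit is $conv(M)=\emptyset$, where the separation theorem is not needed and any nonzero functional works; this is harmless.
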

The following lemma is a mathematical folklore. We present its short proof for a reader's convenience. 
\begin{lemma} \label{lemacik}
Let $M \subsetneq X$ be a linear subspace. If $M$ is closed, it is nowhere dense.
\end{lemma}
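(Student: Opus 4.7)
The plan is to show that a proper closed linear subspace $M$ of $X$ must have empty interior; combined with closedness, this immediately yields nowhere denseness (since $\ov{M} = M$ has empty interior).

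First I would argue by contradiction: suppose $M$ has nonempty interior, so there exist $x_0 \in M$ and $r > 0$ with $B(x_0, r) \subset M$. Since $M$ is a linear subspace containing $x_0$, translation by $-x_0$ keeps us inside $M$, giving $B(0, r) \subset M$. Next I would use the scaling property of a subspace: for any nonzero $y \in X$, the vector $\frac{r}{2\|y\|}\, y$ lies in $B(0,r) \subset M$, and multiplying by the scalar $\frac{2\|y\|}{r}$ shows $y \in M$. Thus $M = X$, contradicting $M \subsetneq X$.

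Having established that the interior of $M$ is empty, I would finish by recalling that a set is nowhere dense exactly when the interior of its closure is empty. Since $M$ is closed we have $\ov{M} = M$, and by the previous step $\on{int}(M) = \emptyset$, so $\on{int}(\ov M) = \emptyset$, which gives nowhere denseness.

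There is no real obstacle here; this is a standard translation-and-rescaling argument. The only point to be slightly careful about is the scaling step, which needs the norm axiom and the fact that subspaces are closed under scalar multiplication — but both are immediate from the hypotheses.
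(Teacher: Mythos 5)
Your proposal is correct and follows essentially the same route as the paper: assume a ball lies in $M$, translate it to the origin, and rescale an arbitrary $y$ into that ball to conclude $M=X$, then use closedness to pass from empty interior to nowhere denseness. (Your write-up of the scaling step, using $\frac{r}{2\|y\|}y$, is in fact cleaner than the paper's, which contains a small typo at that point.)
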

\begin{proof}
Assume that $M$ is closed. Suppose that $M$ is not nowhere dense. Then there exist $x\in X$ and $r>0$ such that $B(x,r) \subset M$. Since $M$ is linear, then also $B(0,r) \subset M$. Let $y \in X$. Take $R > 0$ such that $\frac{||y||}{R} < r$. Then $\frac{||y||}{R} \in M$, and so $y = R\cdot \frac{||y||}{R} \in M$. Therefore, $M=X$, a contradiction. Finally, $M$ is nowhere dense.
\end{proof}
\begin{theorem} \label{tp1} 
The following conditions hold

\begin{itemize}

\item $c$ is strongly lower porous in $\widehat{c}$;

\item $c_0$ is strongly lower porous in $\widehat{c_0}$.

\item $\widehat{c}$ is strongly lower porous in $S$;

\item $\widehat{c_0}$ is strongly lower porous in $S_0$;

\item $S$ is strongly lower porous in $\ell^\infty$.

\end{itemize}
\end{theorem}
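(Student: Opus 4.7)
The plan is to apply the chain of results the author has just assembled: every closed proper linear subspace of a normed space is automatically strongly lower porous. Indeed, combining Lemma~\ref{lemacik}, Proposition~\ref{prop2.5S}, and Corollary~\ref{wniosek por} yields the following general fact, which I will use five times: if $(X,\|\cdot\|)$ is a normed space and $M\subsetneq X$ is a closed linear subspace, then $M$ is nowhere dense (Lemma~\ref{lemacik}); because $M$ is linear, $conv(M)=M$ is nowhere dense; hence $M$ is c-porous in $X$ by Proposition~\ref{prop2.5S}, and finally strongly lower porous in $X$ by Corollary~\ref{wniosek por}.

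So the proof reduces to checking, for each of the five inclusions $c\subset\widehat{c}$, $c_0\subset\widehat{c_0}$, $\widehat{c}\subset S$, $\widehat{c_0}\subset S_0$, $S\subset\ell^\infty$, that the smaller space is a \emph{closed proper} linear subspace of the larger one (which is itself a Banach space, being a closed subspace of $\ell^\infty$ by Proposition~\ref{dom}). Closedness in each case is inherited from closedness in $\ell^\infty$: $c$ and $c_0$ are classically closed in $\ell^\infty$, and $\widehat{c},\widehat{c_0},S,S_0$ are closed in $\ell^\infty$ by Proposition~\ref{dom}; so each smaller space is closed in the larger one with its induced norm.

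Properness of the inclusions is either standard or has been established in the Introduction. The sequence $((-1)^n)$ lies in $\widehat{c_0}\setminus c_0\subset\widehat{c}\setminus c$, showing $c\subsetneq\widehat{c}$ and $c_0\subsetneq\widehat{c_0}$. The example in the Introduction exhibits an element of $S\setminus\widehat{c}$, so $\widehat{c}\subsetneq S$, and the same example shifted by subtracting its Cesàro mean (or the analogous construction with $0/1$ replaced by $-1/1$) produces an element of $S_0\setminus\widehat{c_0}$, giving $\widehat{c_0}\subsetneq S_0$. Finally, any bounded sequence whose Cesàro averages diverge — e.g.\ a sequence of long alternating blocks of $0$'s and $1$'s with block lengths growing geometrically — witnesses $S\subsetneq\ell^\infty$.

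The only substantive point, which I expect to be the main (but mild) obstacle, is simply bookkeeping: ensuring that in each of the five applications the ambient space $X$ really is a normed space (it is, since each is a closed subspace of the Banach space $\ell^\infty$, endowed with the supremum norm) so that Corollary~\ref{wniosek por} applies. Once this is in place, each of the five bullets follows by a single invocation of the general fact stated above, and the theorem is proved.
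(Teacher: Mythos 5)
Your proposal is correct and follows essentially the same route as the paper: the author likewise chains Lemma~\ref{lemacik}, Proposition~\ref{prop2.5S}, and Proposition~\ref{2.8S}/Corollary~\ref{wniosek por} to conclude that each closed proper linear subspace is c-porous and hence strongly lower porous, proving the first bullet and declaring the rest analogous via Proposition~\ref{dom}. Your explicit verification of the properness of each inclusion is a small amount of bookkeeping the paper leaves implicit, but it does not constitute a different argument.
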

\begin{proof}
We will only prove the first assertion, because the reasoning for the rest is similar (we need to use Proposition \ref{dom}). Since $c$ is a linear subspace, it is convex. Because it is closed, by Lemma \ref{lemacik}, it is nowhere dense (in $\widehat{c}$). Using Proposition \ref{prop2.5S}, we get that $c$ is c-porous in $\widehat{c}$. By Proposition \ref{2.8S}, $c$ is strongly lower porous in $\widehat{c}$.
\end{proof}

\section{Algebrability}
One of the reasons to call a space large may be to find some big structure inside like an algebra generated by many elements. Such a reasoning has appeared already in \cite{LM} and later in papers of Gurariy \cite{G1}, \cite{G2}. Following this way of thinking, in \cite{AGS} and \cite{AS} the notions of  lineability, spaceability and algebrability were introduced. Let $\kappa$ be a cardinal number.
We say that
\begin{itemize}
\item a subset $A$ of a vector space $\eL$ is $\kappa$-lineable if $A\cup \{0\}$ contains a $\kappa$-dimensional vector space;
\item a subset $A$ of a Banach space $\eL$ is spaceable if $A\cup \{0\}$ contains an infinite dimensional closed vector space;
\item a subset $A$ of a linear commutative algebra $\eL$ is $\kappa$-algebrable if $A\cup \{0\}$ contains a $\kappa$-generated algebra $B$, that is, the minimal number of generators of $B$ has cardinality $\kappa$.
\end{itemize}
In \cite{BG} there was introduced a strengthened notion of algebrability. We say that a subset $A$ of a linear commutative algebra $\eL$ is strongly $\kappa$-algebrable if $A\cup \{0\}$ contains a $\kappa$-generated algebra which is isomorphic to a free algebra. It is an easy observation that strong $\kappa$-algebrability implies $\kappa$-algebrability, which implies $\kappa$-lineability.
It is worth mentioning that in the last 20 years there appear a lot of interesting results concerning algebrability (e.g. \cite{AMS}, \cite{GGMS}, \cite{GPS}, \cite{GKP}, \cite{SW}).

In \cite{BBF} there were proved two crucial results which we will use in our paper. But first, we need a notion of an exponential-like function. We say that $f\colon \R \to \R$ is exponential like (of rank $m$) if $f(x) =\sum_{i=1}^{m} \alpha_i e^{\beta_i x}$ for some distinct non-zero real numbers $\beta_1, \dots \beta_m$ and some $\alpha_1,\dots, \alpha_m \in \R\setminus\{0\}$.
\begin{theorem}\cite{BBF} \label{texp}
Let $\F\subset \R^{\N}$ and assume that there exists a sequence $z = (z_1, z_2,\dots)\in \F$ such that $(f(z_1),f(z_2),\dots)\in \F \setminus \{(0,0,\dots)\}$ for every exponential-like function $f\colon \R \to \R.$ Then $\F$ is strongly $\mathfrak{c}$-algebrable.
\end{theorem}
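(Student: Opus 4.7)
The plan is to exhibit $\mathfrak{c}$ many sequences that freely generate a subalgebra contained in $\F \cup \{(0,0,\dots)\}$. First I would fix a set $\{r_\alpha : \alpha < \mathfrak{c}\} \subset (0,\infty)$ that is linearly independent over $\Q$; such a set exists because every Hamel basis of $\R$ over $\Q$ has cardinality $\mathfrak{c}$. Using the distinguished sequence $z=(z_n)$ provided by the hypothesis, I would set
$$y^\alpha := \bigl(e^{r_\alpha z_n}\bigr)_{n\in \N} \in \R^{\N}$$
as my family of candidate generators, indexed by $\alpha < \mathfrak{c}$.

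Next I would analyse the typical element of the (non-unital) algebra generated by $\{y^\alpha\}$ under coordinatewise operations. Any such element has the form $P(y^{\alpha_1},\dots,y^{\alpha_k})$ for some finite subfamily of the generators and some polynomial $P$ in $k$ variables without constant term. Writing $P = \sum_{j=1}^m c_j X_1^{m_{j,1}}\cdots X_k^{m_{j,k}}$ with the multi-indices $(m_{j,1},\dots,m_{j,k})$ distinct and non-zero and each $c_j \neq 0$, pointwise multiplication yields
$$P(y^{\alpha_1},\dots,y^{\alpha_k}) = \bigl(f(z_n)\bigr)_{n \in \N}, \qquad f(x) = \sum_{j=1}^{m} c_j e^{\beta_j x},$$
where $\beta_j := m_{j,1}r_{\alpha_1}+\dots+m_{j,k}r_{\alpha_k}$. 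The $\Q$-linear independence of $r_{\alpha_1},\dots,r_{\alpha_k}$ forces the $\beta_j$ to be distinct and non-zero, so $f$ is exponential-like of rank $m$; the hypothesis then guarantees $(f(z_n))_n \in \F\setminus\{(0,0,\dots)\}$. This simultaneously shows that the generated algebra lies inside $\F\cup\{(0,0,\dots)\}$ and that the evaluation map from the free (non-unital) polynomial algebra on $\mathfrak{c}$ generators into $\R^\N$ sends every non-zero polynomial to a non-zero sequence; hence the subalgebra is free of rank $\mathfrak{c}$.

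The main conceptual step, and the one I would expect to require the most care, is the passage from $\Q$-linear independence of the scalars $r_\alpha$ to algebraic independence of the sequences $y^\alpha$ under pointwise operations. This rests on two complementary ingredients: the combinatorial observation that distinct monomials in the $y^{\alpha_i}$ produce distinct exponents $\beta_j$, which is exactly where $\Q$-independence enters; and the fact that the resulting exponential-like $f$ is not identically zero, a standard consequence of the linear independence of the characters $x \mapsto e^{\beta x}$, which combined with the full strength of the hypothesis (not merely $(f(z_n))_n \in \F$ but also $(f(z_n))_n \neq (0,0,\dots)$) yields injectivity of the evaluation. Once both points are in place, strong $\mathfrak{c}$-algebrability follows directly from the definition.
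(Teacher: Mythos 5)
Your argument is correct and is essentially the standard proof of this result: the paper itself does not prove Theorem \ref{texp} but only cites it from \cite{BBF}, and your construction --- generators $y^\alpha=(e^{r_\alpha z_n})_n$ with $\{r_\alpha:\alpha<\mathfrak{c}\}$ positive and linearly independent over $\Q$, plus the observation that distinct constant-term-free monomials produce distinct positive exponents $\beta_j$ and hence an exponential-like $f$ to which the hypothesis applies --- is exactly the argument given there. No gaps: the positivity of the $r_\alpha$ guarantees $\beta_j\neq 0$, the $\Q$-independence guarantees the $\beta_j$ are distinct, and the hypothesis $(f(z_n))_n\in\F\setminus\{(0,0,\dots)\}$ yields both containment of the generated algebra in $\F\cup\{(0,0,\dots)\}$ and its freeness.
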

Originally, $F$ was considered as a subset of $\R^{[0,1]}$ not $\R^{\N}$, but this replacement does not change the proof. It has been already pointed out in \cite{BBFG}.
\begin{lemma}\label{lemexp} \cite{BBF}
For any $n \in \N$, any exponential-like function $f\colon \R\to \R$ of rank $m$ and any $c \in \R$, the preimage $f^{-1}(\{c\})$ has at most $m$ elements.
\end{lemma}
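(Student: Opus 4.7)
The plan is to strengthen the statement slightly and prove it by induction on $m$ combined with Rolle's theorem. Specifically, I would first establish the following auxiliary claim: if $g\colon\R\to\R$ has the form $g(x) = \sum_{i=1}^m \alpha_i e^{\beta_i x}$ with $m \geq 1$, $\alpha_i \in \R \setminus \{0\}$, and pairwise distinct $\beta_i \in \R$ (dropping the restriction $\beta_i \neq 0$), then $g$ has at most $m-1$ real zeros. The conclusion of the lemma will drop out immediately from this.

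For the base case $m=1$, $g(x) = \alpha_1 e^{\beta_1 x}$ never vanishes, so it has $0 \leq m-1$ zeros. For the inductive step, I would multiply $g$ by the strictly positive factor $e^{-\beta_m x}$, which preserves the zero set, and obtain
\[
h(x) = \alpha_m + \sum_{i=1}^{m-1} \alpha_i e^{(\beta_i - \beta_m)x}.
\]
Differentiating kills the constant term, leaving
\[
h'(x) = \sum_{i=1}^{m-1} \alpha_i(\beta_i - \beta_m)\, e^{(\beta_i - \beta_m)x},
\]
which is again of the auxiliary form with $m-1$ terms: the exponents $\beta_i - \beta_m$ remain pairwise distinct, and each coefficient $\alpha_i(\beta_i - \beta_m)$ is nonzero because $\beta_i \neq \beta_m$. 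By the inductive hypothesis, $h'$ has at most $m-2$ zeros, so Rolle's theorem forces $h$ (and hence $g$) to have at most $m-1$ zeros.

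The lemma now follows by noting that $f^{-1}(\{c\})$ is the zero set of $f(x) - c$. If $c = 0$ the auxiliary claim applied to $f$ directly gives at most $m-1 \leq m$ zeros. If $c \neq 0$, write $f(x) - c = \sum_{i=1}^{m} \alpha_i e^{\beta_i x} + (-c) e^{0\cdot x}$; since every $\beta_i$ is nonzero, the exponent $0$ is new and distinct from the others, and $-c \neq 0$, so this is an expression of the auxiliary form with $m+1$ terms and therefore has at most $m$ zeros.

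The only delicate point, and the reason for reformulating, is that the natural reduction via multiplication by $e^{-\beta_m x}$ and differentiation introduces the exponent $0$ exactly when $c \neq 0$; an induction run purely on the original definition of ``exponential-like'' would therefore stall. Allowing zero exponents in the auxiliary claim is what makes the induction go through cleanly, and it is really the substance of the argument—everything else is a one-line application of Rolle.
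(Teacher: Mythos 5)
Your proof is correct and complete. The paper itself does not prove this lemma --- it cites it from \cite{BBF} --- and your argument is precisely the standard one found there: reduce to counting zeros of an exponential polynomial with pairwise distinct (possibly zero) exponents, and induct via multiplication by $e^{-\beta_m x}$ followed by differentiation and Rolle's theorem. You correctly identified and handled the one delicate point, namely that passing from $f$ to $f-c$ with $c\neq 0$ introduces the exponent $0$, which is admissible in your strengthened auxiliary claim exactly because the definition of exponential-like requires the $\beta_i$ to be nonzero; the bound of at most $m$ preimages (rather than $m-1$) comes from this extra term.
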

Again, originally the domain of $f$ was $[0,1]$, but we can easily replace it with $\R$. Such version of this lemma was used for example in \cite{BBFG}.

\begin{theorem}
The family $\widehat{c} \setminus c$ is strongly $\mathfrak{c}$-algebrable.
\end{theorem}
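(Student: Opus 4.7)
The plan is to invoke Theorem~\ref{texp} with $\F = \widehat c \setminus c$. Since $0 \in c$, the condition ``$(f(z_1), f(z_2), \ldots) \in \F \setminus \{(0,0,\ldots)\}$'' of that theorem simplifies to ``$(f(z_n)) \in \widehat c \setminus c$''. So I must exhibit a single $z \in \widehat c \setminus c$ such that $(f(z_n)) \in \widehat c \setminus c$ for every exponential-like $f \colon \R \to \R$.

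My candidate is $z_n := \sin n$. First I would check that $z \in \widehat c \setminus c$. The standard geometric-sum estimate
\[
\left| \sum_{k=0}^{n-1} \sin(j+k) \right| = \left| \operatorname{Im}\left( e^{ij} \frac{e^{in}-1}{e^{i}-1} \right) \right| \le \frac{2}{|e^{i}-1|}
\]
gives a $j$-independent bound that forces the windowed averages to $0$, so $z \in \widehat c$ with Banach limit $0$. Meanwhile, since $1/(2\pi) \notin \Q$, the sequence $(n \bmod 2\pi)_n$ is dense in $[0,2\pi]$, hence $(\sin n)$ is dense in $[-1,1]$ and in particular not convergent.

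Next, for an arbitrary exponential-like $f(x) = \sum_{i=1}^m \alpha_i e^{\beta_i x}$, I would observe that $f$ is real-analytic and non-constant on $\R$: indeed, $f \equiv c$ would force $f' \equiv 0$, and linear independence of the distinct exponentials $\{e^{\beta_i x}\}_i$ together with $\beta_i, \alpha_i \neq 0$ leads to a contradiction. Hence $g(x) := f(\sin x)$ is continuous, $2\pi$-periodic and non-constant. The non-convergence of $(f(\sin n))$ then follows quickly: $f([-1,1])$ is a non-degenerate interval, density of $(\sin n)$ in $[-1,1]$ yields density of $(f(\sin n))$ in $f([-1,1])$, so $(f(\sin n)) \notin c$ (and \emph{a fortiori} it is not the zero sequence; Lemma~\ref{lemexp} gives the same conclusion since $f^{-1}(\{0\})$ is finite).

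The hard part will be showing $(f(\sin n)) \in \widehat c$, that is, uniform-in-the-shift convergence of the Cesàro averages of $g(n)$. I would handle this via Weierstrass trigonometric approximation: given $\varepsilon > 0$, pick a trigonometric polynomial $P(x) = \sum_{|k| \le K} p_k e^{ikx}$ with $\|g - P\|_\infty < \varepsilon$. For each integer $k \ne 0$ the closed form
\[
\sum_{n=M}^{M+N-1} e^{ikn} \;=\; \frac{e^{ikM}(e^{ikN}-1)}{e^{ik}-1}
\]
has modulus $\le 2/|e^{ik}-1|$, so the windowed averages of $P$ differ from $p_0 = \frac{1}{2\pi}\int_0^{2\pi} P$ by at most $C_K/N$, uniformly in $M$. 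A triangle inequality then bounds the discrepancy between the windowed averages of $g(n)$ and $\frac{1}{2\pi}\int_0^{2\pi} g$ by $3\varepsilon$ once $N$ is large enough (depending only on $\varepsilon$ and $K$), giving $(f(\sin n)) \in \widehat c$. Theorem~\ref{texp} then delivers the strong $\mathfrak c$-algebrability of $\widehat c \setminus c$.
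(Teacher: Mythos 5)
Your proof is correct, but it takes a genuinely different route from the paper's. The paper builds an explicit $\{0\}\cup[1,2]$-valued sequence $z$ in which the non-zero entries (drawn from a countable dense-in-itself set of values, each repeated infinitely often) occur with density zero in every window; uniform convergence of the windowed averages of $f(z)$ to $f(0)$ then follows from elementary counting, and non-convergence of $f(z)$ follows from Lemma~\ref{lemexp} because $f(z)$ attains infinitely many values, each infinitely often. You instead take $z_n=\sin n$ and prove almost convergence of $(f(\sin n))$ to $\frac{1}{2\pi}\int_0^{2\pi}f(\sin t)\,dt$ via Weierstrass approximation of the $2\pi$-periodic function $g=f\circ\sin$ by trigonometric polynomials plus the geometric-sum bound $\bigl|\sum_{n=M}^{M+N-1}e^{ikn}\bigr|\le 2/|e^{ik}-1|$ --- in effect the unique ergodicity of the irrational rotation. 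Note that the paper's non-convergence argument would not transfer to your $z$: since $\pi$ is irrational, each value of $\sin n$ occurs exactly once, so you correctly replace it by a density argument (density of $(\sin n)$ in $[-1,1]$ and non-constancy of the real-analytic $f$ on $[-1,1]$, so that $(f(\sin n))$ is dense in a nondegenerate interval). Your approach costs a little more machinery (Fej\'er/Weierstrass, irrationality of $\pi$) but yields a conceptually transparent example and a nontrivial Banach limit; the paper's is longer in bookkeeping but entirely elementary and pins the limit at $f(0)$. Both correctly reduce to Theorem~\ref{texp}, and both verify the hypothesis $z\in\F$ itself.
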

\begin{proof}
First, put $$(a_n) := (1,2,\frac{3}{2},\frac{5}{4},\frac{7}{4},\frac{9}{8},\frac{11}{8},\frac{13}{8},\frac{15}{8},\dots).$$
Now, put $$(b_n):= (a_1,a_1,a_2,a_1,a_2,a_3,a_1,a_2,a_3,a_4,\dots).$$
Let $z$ be a sequence defined in the following way: put $m_1:=1$ and $m_j:=m_{j-1}+j$ for $j >1$. For $n \in \N$ we define
$$z_n := \left\{ \begin{array}{ccc}
b_n\;\text{ if }\; n=m_j \\ 
0 \;\text{ for the remaining }n,%
\end{array}%
\right. $$
that is, $$z=(1,0,1,0,0,2,0,0,0,1,0,0,0,0,2,0,0,0,0,0,\frac{3}{2},\dots).$$

Let $f\colon\R\to \R$ be an exponential-like function. We will show that $$f(z):=(f(z_1),f(z_2),\dots)\in \widehat{c} \setminus c.$$ By Lemma \ref{lemexp}, we know that every value of $f$ can be obtained only on finitely many arguments. Since $z$ is a sequence which admits infinitely many values and each of them appears infinitely many times, also $f(z)$ has this property. Hence the sequence $f(z)$ is not convergent. 

Denote by $L$ and $M$, respectively, the minimal and the maximal values of $f$ on $[1,2]$. They exist by Weierstrass theorem. Take $k,j \in \N$ and $n \in \{m_{j},m_j+1,\dots, m_{j+1}-1\}.$ Then in the finite sequence $(z_k, z_{k+1}, \dots, z_{k+n-1})$, we have at least $\frac{j(j-1)}{2}$ zeros.
If $f(0) \leq 0$, we have 
$$\frac{f(z_k)+f(z_{k+1})+\dots +f(z_{k+n-1})}{n} \leq \frac{jM+\frac{j(j-1)}{2}f(0)}{m_j}=
 \frac{jM+\frac{j(j-1)}{2}f(0)}{\frac{j(j+1)}{2}}=\frac{2M+f(0)j-f(0)}{j+1}$$
and if $f(0)>0$, then 
$$\frac{f(z_k)+f(z_{k+1})+\dots f(z_{k+n-1})}{n} \leq 
 \frac{jM+\frac{j(j+1)}{2}f(0)}{\frac{j(j+1)}{2}}=\frac{2M}{j+1}+f(0).$$
The both sequences converge to $f(0)$, when $n$ (and thus also $j$) tends to infinity.
Similarly, if $f(0) \geq 0$, we have
 $$\frac{f(z_k)+f(z_{k+1})+\dots f(z_{k+n-1})}{n} \geq \frac{jL+\frac{j(j-1)}{2}f(0)}{m_{j+1}}=
 \frac{jL+\frac{j(j-1)}{2}f(0)}{\frac{(j+2)(j+1)}{2}}=\frac{2L+f(0)j-f(0)}{j+3+\frac{2}{j}}$$
 and if $f(0) < 0$, then
 $$\frac{f(z_k)+f(z_{k+1})+\dots f(z_{k+n-1})}{n} \geq
 \frac{jL+\frac{j(j+1)}{2}f(0)}{\frac{(j+2)(j+1)}{2}}=\frac{2Lj}{j^2+3j+2}+\frac{f(0)j}{j+2}.$$
 Again, the both sequences converge to $f(0)$, when $n$ (and thus also $j$) tends to infinity.
By the squeeze theorem, we get that $\frac{f(z_k)+f(z_{k+1})+\dots f(z_{k+n-1})}{n}\xrightarrow{n\to \infty} f(0)$. Moreover, the convergence is uniform with respect to $k$. Thus, by Theorem \ref{t1}, $f(z) \in \widehat{c}$. Finally, by Theorem \ref{texp}, $\widehat{c}\setminus c$ is strongly $\mathfrak{c}$-algebrable.
 
\end{proof}

\begin{theorem}
The family $ S \setminus \widehat{c}$ is strongly $\mathfrak{c}$-algebrable.
\end{theorem}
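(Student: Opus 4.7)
The plan is to mirror the strategy of the previous theorem: construct one sequence $z \in S \setminus \widehat{c}$ such that, for every exponential-like function $f$, the sequence $f(z):=(f(z_1),f(z_2),\dots)$ belongs to $(S\setminus\widehat{c})\setminus\{(0,0,\dots)\}$, and then invoke Theorem \ref{texp}. The design of $z$ combines the block structure of the Example in Section 1 (which already produces an element of $S\setminus\widehat{c}$) with the trick from the previous proof of injecting the sequence $(b_n)$, so that after composition with any non-trivial exponential-like $f$ one still sees enough distinct values to spoil uniform convergence.

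Concretely, I would keep the auxiliary sequences $(a_n)$ and $(b_n)$ defined in the previous proof, set $m_0:=0$ and $m_j := m_{j-1} + j + 2^j$, and define
$$z_n := \begin{cases} b_j & \text{if } m_{j-1} < n \leq m_{j-1}+j, \\ 0 & \text{if } m_{j-1}+j < n \leq m_j, \end{cases}$$
so that the $j$-th block of $z$ consists of $j$ copies of $b_j$ followed by $2^j$ zeros. Since $(a_n)$ is dense in $[1,2]$ and each $a_i$ appears infinitely often in $(b_n)$, the set $\{b_k:k\geq n\}$ is dense in $[1,2]$ for every fixed $n$.

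Fix an exponential-like $f$. The fact $f(z)\in S$ is a direct computation: at $n=m_j$,
$$\sum_{k=1}^{m_j} f(z_k) = \sum_{i=1}^{j} i\,f(b_i) + (2^{j+1}-2)\,f(0),$$
and since $|f(b_i)|\leq C:=\max_{[1,2]}|f|$ the first term is $O(j^2)$ while $m_j\sim 2^{j+1}$, so the mean tends to $f(0)$. For indices strictly inside block $j$ I split the tail according to whether it falls in the $b_j$-part or the zero-part and use $j^2/2^j\to 0$ to obtain the same limit. To show $f(z)\notin\widehat{c}$ I apply Theorem \ref{t1}: for any $n\geq 1$ and any $k\geq n$ the window $(z_{m_{k-1}+1},\dots,z_{m_{k-1}+n})$ lies entirely in the $b_k$-part of block $k$, so its mean equals $f(b_k)$. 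By continuity of $f$ and density of $\{b_k:k\geq n\}$ in $[1,2]$,
$$\sup_j \frac{f(z_j)+\dots+f(z_{j+n-1})}{n} \geq \max f([1,2]), \qquad \inf_j \frac{f(z_j)+\dots+f(z_{j+n-1})}{n} \leq \min f([1,2]).$$
If $f(z)$ were in $\widehat{c}$, both expressions would converge (in $n$) to $f(0)$, forcing $\min f([1,2])=f(0)=\max f([1,2])$, i.e.\ $f$ constant on $[1,2]$. But any exponential-like function is a non-trivial real-analytic function, hence non-constant on any interval, giving a contradiction. Non-degeneracy $f(z)\neq(0,0,\dots)$ follows from Lemma \ref{lemexp}: $f^{-1}(\{0\})$ is finite, so $f(b_k)\neq 0$ for all but finitely many $k$. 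Theorem \ref{texp} then yields strong $\mathfrak{c}$-algebrability of $S\setminus\widehat{c}$.

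I expect the only real obstacle to be the partial-sum bookkeeping for indices in the interior of a block; all the analytic content is packaged in the inequality $\min f([1,2])<\max f([1,2])$, which is immediate from the definition of an exponential-like function, while density of $\{b_k:k\geq n\}$ is exactly the property $(b_n)$ was tailored for.
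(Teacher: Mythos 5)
Your proposal is correct and follows essentially the same route as the paper: the same auxiliary sequences $(a_n)$ and $(b_n)$, the same block design ($j$ copies of $b_j$ followed by exponentially many zeros), the same squeeze/bookkeeping argument showing the Ces\`aro means tend to $f(0)$, and the same appeal to Theorem \ref{texp} and Lemma \ref{lemexp}. The only cosmetic difference is that you exclude $f(z)\in\widehat{c}$ via density of $\{b_k\colon k\ge n\}$ in $[1,2]$ and non-constancy of $f$ there, whereas the paper picks a single $a_k$ with $f(a_k)\neq f(0)$ and compares length-$n$ windows consisting of $a_k$'s against windows consisting of zeros; both are valid.
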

\begin{proof}
First, put $$(a_n) := (1,2,\frac{3}{2},\frac{5}{4},\frac{7}{4},\frac{9}{8},\frac{11}{8},\frac{13}{8},\frac{15}{8},\dots).$$
Now, put $$(b_n):= (a_1,a_1,a_2,a_1,a_2,a_3,a_1,a_2,a_3,a_4,\dots).$$
Let $z$ be a sequence defined in the following way: put $m_1:=1$ and $m_j:=m_{j-1}+j-1+2^{j-1}$ for $j\in \N$. For $n \in \N$ we define
$$z_n := \left\{ \begin{array}{ccc}
b_j\;\text{ if }\; n \in \{m_{j}, m_{j}+1, \dots m_j+j-1\} \\ 
0 \;\text{ for the remaining }n,%
\end{array}%
\right. $$
that is, $$z=(1,0,0,1,1,0,0,0,0,2,2,2,0,0,0,0,0,0,0,0,1,1,1,1\dots).$$

Let $f\colon\R\to \R$ be an exponential-like function. We will show that $$f(z):=(f(z_1),f(z_2),\dots)\in S \setminus \widehat{c}.$$ By Lemma \ref{lemexp}, we know that every value of $f$ can be obtained only on finitely many arguments. Therefore, there is $k \in\N$ such that $f(a_k) \neq f(0)$. By the construction of $z$, for any $n,N \in \N$ there are $j, m >N$ such that 
$$z_j=z_{j+1}=\dots = z_{j+n-1}= a_k,$$
$$z_m=z_{m+1}= \dots = z_{m+n-1} = 0.$$
Hence
$$ \lim\limits_{n \to \infty}\left(\sup_j \left\{\frac{z_j+\dots+z_{j+n-1}}{n}\right\}\right) \neq \lim\limits_{n \to \infty}\left(\inf_j \left\{\frac{z_j+\dots+z_{j+n-1}}{n}\right\}\right).$$
Thus, $f(z) \notin \widehat{c}.$

Denote by $L$ and $M$, respectively the minimal and the maximal values of $f$ on $[1,2]$. They exist by Weierstrass theorem. Take $j \in \N$ and $n \in \{m_{j},m_j+1,\dots, m_{j+1}-1\}.$ Then, in the finite sequence $(z_1, z_{2}, \dots, z_{n})$, we have at least $2^{j}-2$ zeros and at most $\frac{(j+1)j}{2}$ of other values.
Hence we have 
$$\frac{f(z_1)+f(z_{2})+\dots +f(z_{n})}{n} \leq \frac{M\cdot \frac{(j+1)j}{2}+(2^j-2)f(0)}{m_j}=
 \frac{M\cdot \frac{(j+1)j}{2}+2^jf(0)-2f(0)}{2^j-1+\frac{j(j-1)}{2}}$$
or, if the above number is less than $f(0)$, then 
$$\frac{f(z_1)+f(z_{2})+\dots +f(z_{n})}{n} \leq \frac{M\cdot \frac{(j+1)j}{2}+(2^{j+1}-2)f(0)}{m_{j+1}-1}=
 \frac{M\cdot \frac{(j+1)j}{2}+2^{j+1}f(0)-2f(0)}{2^{j+1}-2+\frac{j(j+1)}{2}}.$$
The both sequences converge to $f(0)$, when $n$ (and thus also $j$) tends to infinity.
Similarly,
 $$\frac{f(z_1)+f(z_{2})+\dots +f(z_{n})}{n} \geq \frac{L\cdot \frac{(j+1)j}{2}+(2^j-2)f(0)}{m_{j}+j-1}=
 \frac{L\cdot \frac{(j+1)j}{2}+2^jf(0)-2f(0)}{2^{j}-1+\frac{j(j+1)}{2}}$$
or, if the above number is greater than $f(0)$, then 
$$\frac{f(z_1)+f(z_{2})+\dots +f(z_{n})}{n} \geq \frac{L\cdot \frac{(j+1)j}{2}+(2^{j+1}-2)f(0)}{m_{j+1}-1}=
 \frac{L\cdot \frac{(j+1)j}{2}+2^{j+1}f(0)-2f(0)}{2^{j+1}-2+\frac{j(j+1)}{2}}.$$
The both sequences converge to $f(0)$, when $n$ (and thus also $j$) tends to infinity.
By the squeeze theorem, we get that $\frac{f(z_1)+f(z_{2})+\dots +f(z_{n})}{n}\xrightarrow{n\to \infty} f(0)$. Thus $f(z) \in S$. Finally, by Theorem \ref{texp}, $S\setminus \widehat{c}$ is strongly $\mathfrak{c}$-algebrable.
 
\end{proof}

\begin{theorem}
The family $ \ell^\infty \setminus S$ is strongly $\mathfrak{c}$-algebrable.
\end{theorem}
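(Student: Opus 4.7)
The plan is to follow the template of the two previous theorems and apply Theorem \ref{texp}: I would build a single witness $z \in \ell^\infty$ such that $f(z) \in \ell^\infty \setminus S$ for every exponential-like $f$. Keeping $(a_n)$ and $(b_n)$ as in the proof for $\widehat{c} \setminus c$, so that every $a_j$ appears infinitely often in $(b_n)$, I would form $z$ by concatenating blocks $B_1, B_2, \dots$ of lengths $\ell_1, \ell_2, \dots$ which alternate in type: $B_{2k-1}$ consists of $\ell_{2k-1}$ zeros and $B_{2k}$ consists of $\ell_{2k}$ copies of $b_k$. Setting $N_k := \ell_1 + \dots + \ell_k$, I would choose the block lengths to grow so rapidly that $N_{k-1}/N_k \to 0$, for example $\ell_k := k \cdot N_{k-1}$. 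Since $|z_n| \leq 2$ we have $z \in \ell^\infty$.

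Now fix an exponential-like $f$. Because $z$ attains infinitely many distinct values and $f^{-1}(\{0\})$ is finite by Lemma \ref{lemexp}, the sequence $f(z)$ has infinitely many non-zero entries, giving the non-triviality required by Theorem \ref{texp}. Continuity of $f$ on the bounded range of $z$ yields $f(z) \in \ell^\infty$.

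The core step is to show $f(z) \notin S$. I would examine the Cesàro means $\sigma_n := \frac{1}{n}\sum_{i=1}^{n} f(z_i)$ along $n = N_k$. Writing $v_k := f(0)$ for odd $k$ and $v_k := f(b_{k/2})$ for even $k$, the identity
$$\sigma_{N_k} \;=\; \frac{N_{k-1}}{N_k}\,\sigma_{N_{k-1}} + \frac{\ell_k}{N_k}\,v_k,$$
combined with the boundedness of $(v_k)$ and $N_{k-1}/N_k \to 0$, gives $\sigma_{N_k} - v_k \to 0$. Along odd $k$ this produces the cluster value $f(0)$, while along even $k$ it produces a cluster value $f(a_j)$ for every $j \in \N$ (since each $a_j$ equals $b_k$ for infinitely many $k$). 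A second appeal to Lemma \ref{lemexp}, applied to the value $f(0)$, yields some $j_0$ with $f(a_{j_0}) \neq f(0)$, so $(\sigma_n)$ has at least two distinct cluster points and fails to converge. Hence $f(z) \in \ell^\infty \setminus S$, and Theorem \ref{texp} delivers strong $\mathfrak{c}$-algebrability.

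The only non-routine point, which I expect to be the main obstacle, is the last step: because the non-zero block value $f(b_k)$ itself varies with $k$, there is no single "alternate limit" to contrast with $f(0)$. The fix is to argue via two distinct cluster points rather than two limits, using the finiteness of the fibers of $f$ to prevent the pathological coincidence $f(a_j) = f(0)$ for all $j$.
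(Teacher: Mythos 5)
Your proposal is correct and follows essentially the same strategy as the paper: a single block-constant witness whose block lengths grow fast enough that the Ces\`aro mean at each block's end tracks that block's value, followed by an appeal to the finiteness of the fibers of $f$ (Lemma \ref{lemexp}) to produce two distinct cluster points of the means. The paper's witness uses $m_j = j^j$ and no zero blocks, contrasting $f(a_k)$ with $f(1)$ rather than with $f(0)$, but these are immaterial variations; your "main obstacle" is resolved exactly as the paper resolves it.
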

\begin{proof}
First, put $$(a_n) := (1,2,\frac{3}{2},\frac{5}{4},\frac{7}{4},\frac{9}{8},\frac{11}{8},\frac{13}{8},\frac{15}{8},\dots).$$
Now, put $$(b_n):= (a_1,a_1,a_2,a_1,a_2,a_3,a_1,a_2,a_3,a_4,\dots).$$
Let $z$ be a sequence defined in the following way: put $m_0:=0$ and $m_j:=j^j$ for $j\in \N$. Let $j \in \N$. For $n \in \{m_{j-1}+1, m_{j-1}+2, \dots ,m_j\}$ we define
$z_n := b_j.$

Let $f\colon\R\to \R$ be an exponential-like function. We will show that $$f(z):=(f(z_1),f(z_2),\dots)\in \ell^\infty \setminus S.$$ Since $f$ is continuous on $[1,2]$ and all values of $z$ are in $[1,2]$, $f(z)$ is bounded. By Lemma \ref{lemexp}, we know that every value of $f$ can be obtained only on finitely many arguments. Therefore, there is $k \in\N$ such that $f(a_k) \neq f(1)$. By the construction of $z$, there are increasing sequences of natural numbers $(j_n)$ and $(l_n)$ such that for any $n\in \N$ 
$$z_{m_{j_n-1}+1}=z_{m_{j_n-1}+2}=\dots = z_{m_{j_n}}= a_k,$$
$$z_{m_{l_n-1}+1}=z_{m_{l_n-1}+2}=\dots = z_{m_{l_n}}= 1.$$

Denote by $L$ and $M$, respectively the minimal and the maximal values of $f$ on $[1,2]$. 
We have
$$\frac{f(1)+f(2)+\dots+f(z_{m_{j_n}})}{m_{j_n}} \leq \frac{M\cdot m_{j_n-1}+f(a_k)\cdot(m_{j_n}-m_{j_{n}-1})}{j_n^{j_n}}$$$$= \frac{M\cdot (j_{n}-1)^{j_{n}-1}+f(a_k)\cdot(j_n^{j_n}-(j_{n}-1)^{j_{n}-1})}{j_n^{j_n}}=\frac{M-f(a_k)}{j_n}\cdot \left(\frac{j_{n}-1}{j_n}\right)^{j_{n}-1}+f(a_k)\xrightarrow{n\to\infty}f(a_k)$$
and 
$$\frac{f(1)+f(2)+\dots+f(z_{m_{j_n}})}{m_{j_n}} \geq \frac{L\cdot m_{j_n-1}+f(a_k)\cdot(m_{j_n}-m_{j_{n}-1})}{j_n^{j_n}}\xrightarrow{n\to\infty}f(a_k).$$
Thus, 
$$\frac{f(1)+f(2)+\dots+f(z_{m_{j_n}})}{m_{j_n}}\xrightarrow{n\to\infty}f(a_k).$$
Similarly,
$$\frac{f(1)+f(2)+\dots+f(z_{m_{l_n}})}{m_{l_n}}\xrightarrow{n\to\infty}f(1).$$
Therefore, the sequence $\left(\frac{f(1)+f(2)+\dots+f(z_{n})}{n}\right)$ is not convergent.
 Thus, $f(z) \notin S$. Finally, by Theorem \ref{texp}, $\ell^\infty\setminus S$ is strongly $\mathfrak{c}$-algebrable.
 
\end{proof} 
From above theorems we can draw the conclusions that the sets $c$ in $\widehat{c}$, $\widehat{c}$ in $S$ and $S$ in $\ell^\infty$ are small in the algebraic sense.
It is worth pointing out that the obtained results are the best possible in the sense of the cardinality of the sets of the generators. 

\section{Measure of the spaces connected to Banach limits}
From now on we will restrict our considerations to subspaces of $X=(-\frac{1}{2},\frac{1}{2})^{\N}$. In $X$ we can consider a product $\sigma$-algebra $\F$ generated by the $\sigma$-algebra of measurable sets in $(-\frac{1}{2},\frac{1}{2})$ and a product measure $\mu$ generated by the Lebesgue measure on $(-\frac{1}{2},\frac{1}{2})$. Then, $(X,\F,\mu)$ is a probability space. This way we can measure how large are the considered families. Similar reasoning was used in \cite{BFN}.
\begin{theorem}\label{m}
\begin{itemize}
\item[(1)] $\mu(S_0\cap X) = 1$,

\item[(2)] $\mu(\widehat{c}\cap X) = 0$.
\end{itemize}
\end{theorem}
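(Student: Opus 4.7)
My plan is to view $(X, \F, \mu)$ as a probability space on which the coordinate projections $\pi_n(x) = x_n$ form an i.i.d.\ sequence of uniform random variables on $(-1/2, 1/2)$, each of mean $0$ and finite variance $1/12$. Measurability of $S_0 \cap X$ and $\widehat{c} \cap X$ (the latter via the Lorentz characterization of Theorem \ref{t1}) follows from countable operations on the $\pi_n$, so I would dispatch it with a short remark. Part (1) is then an immediate consequence of Kolmogorov's strong law of large numbers applied to $(x_n)$: for $\mu$-a.e.\ $x$ one has $\frac{x_1+\cdots+x_n}{n} \to 0$, hence $x \in S_0$, proving $\mu(S_0 \cap X) = 1$.

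For (2), I will use the remark after Theorem \ref{t1}: to conclude that $x\notin \widehat{c}$ it suffices to show that $\lim_n \sup_j \frac{x_j+\cdots+x_{j+n-1}}{n}$ and $\lim_n \inf_j \frac{x_j+\cdots+x_{j+n-1}}{n}$ are not equal. Fix $n\in\N$ and rational $\epsilon \in (0, 1/2)$, and consider the disjoint-block events
\[
E_k^{n,\epsilon} := \{x\in X : x_{(k-1)n+1},\dots,x_{kn} \in (1/2-\epsilon,1/2)\}, \qquad k\in\N.
\]
They depend on pairwise disjoint coordinates, so they are mutually independent, each of measure $\epsilon^n > 0$. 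Since $\sum_k \mu(E_k^{n,\epsilon}) = \infty$, the second Borel--Cantelli lemma yields that $\mu$-a.e.\ $x$ lies in infinitely many $E_k^{n,\epsilon}$; on each of them the corresponding length-$n$ block average exceeds $1/2-\epsilon$, forcing $\sup_j \frac{x_j+\cdots+x_{j+n-1}}{n} \geq 1/2-\epsilon$ almost surely for this particular $n$ and $\epsilon$.

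Taking a countable intersection over $n \in \N$ and rational $\epsilon > 0$, I obtain a set of full $\mu$-measure on which $\sup_j \frac{x_j+\cdots+x_{j+n-1}}{n} = 1/2$ for every $n$; the symmetric argument with the intervals $(-1/2, -1/2+\epsilon)$ gives $\inf_j \frac{x_j+\cdots+x_{j+n-1}}{n} = -1/2$ for every $n$. Hence for a.e.\ $x$ the two limits in $n$ are $1/2$ and $-1/2$, which are unequal, so $x \notin \widehat{c}$ and $\mu(\widehat{c}\cap X) = 0$. The only subtlety is the exchange of ``$\mu$-a.e.\ for each $n$'' with ``for every $n$ holds $\mu$-a.e.'', which is handled by the countable intersection; the independence needed for Borel--Cantelli is automatic from the disjointness of the index blocks.
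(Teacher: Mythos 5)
Your proof is correct. Part (1) is essentially identical to the paper's argument (strong law of large numbers for the i.i.d.\ uniform coordinates). Part (2), however, takes a genuinely different route. The paper never computes the almost-sure values of $\sup_j$ and $\inf_j$; instead it covers $\widehat{c}\cap X$ (an uncountable union over the possible limit values $s$) by two measurable events --- ``all block averages eventually lie in $[-\frac12,\frac14)$'' and ``all block averages eventually lie in $(-\frac14,\frac12]$'' --- and kills each one by observing that the disjoint length-$N$ block averages $Y_m$ are i.i.d.\ with $\mu(Y_1<\frac14)=p<1$, so the infinite intersection has measure $p\cdot p\cdots=0$. Your argument instead applies the second Borel--Cantelli lemma to the events that an entire length-$n$ block of coordinates lies in $(\frac12-\epsilon,\frac12)$, concluding that for a.e.\ $x$ one has $\sup_j\frac{x_j+\dots+x_{j+n-1}}{n}=\frac12$ and $\inf_j\frac{x_j+\dots+x_{j+n-1}}{n}=-\frac12$ for every $n$, and then invokes the Remark after Theorem \ref{t1}. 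Both proofs rest on the same independence of disjoint coordinate blocks, but yours yields a sharper conclusion (a.e.\ sequence fails almost convergence maximally, with uniform Ces\`aro oscillation equal to the full width of the interval) and sidesteps the uncountable union over $s$ by working directly with the $\sup$/$\inf$ characterization; the paper's version is slightly more elementary in that it needs only the fact that an infinite product of a fixed $p<1$ vanishes rather than the divergence half of Borel--Cantelli. One point worth making explicit in your write-up: your argument shows that a measurable set of full measure is disjoint from $\widehat{c}\cap X$, so to state $\mu(\widehat{c}\cap X)=0$ you should record (as you sketch via Theorem \ref{t1}) that $\widehat{c}\cap X$ is $\F$-measurable, since $\mu$ need not be complete; the same remark applies to the paper's proof.
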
 
\begin{proof}

Ad (1) For $j \in \N$ define random variables $Y_j: X \to \R$ by the formula $Y_j((x_n)) = x_j$. Then $(Y_j)$ is an i.i.d. sequence. Moreover,
$$EY_j = \int_{-\frac{1}{2}}^{\frac{1}{2}} y dy = 0.$$ 
By the law of large numbers \cite[Theorem 2.25]{Bi},
$$\mu(S_0\cap X)=\mu\left(\left\{x \in X \colon \lim\limits_{n\to \infty} \frac{1}{n} \sum_{j=1}^n x_j = 0\right\}\right)=\mu\left(\left\{x \in X \colon \lim\limits_{n\to \infty} \frac{1}{n} \sum_{j=1}^n Y_j(x) = 0\right\}\right) = 1 .$$

Ad (2) We have 
$$\widehat{c} \cap X = \left\{ x \in X \colon \exists_{s\in[-\frac{1}{2},\frac{1}{2}]} \forall_{k \in \N} \exists_{N \in \N} \forall_{j \in \N} \forall_{n \geq N} \left\vert \frac{x_j+\dots+x_{j+n-1}}{n} - s \right\vert < \frac{1}{k} \right\} 
$$$$= \bigcup_{s\in[-\frac{1}{2},\frac{1}{2}]} \bigcap_{k \in \N} \bigcup_{N \in \N} \bigcap_{j \in \N} \bigcap_{n \geq N} \left\{ x \in X \colon \frac{x_j+\dots+x_{j+n-1}}{n} \in \left(s- \frac{1}{k},s+\frac{1}{k}\right) \right\} 
$$$$\subset \bigcup_{N \in \N} \bigcap_{j \in \N} \bigcap_{n \geq N} \left\{ x \in X \colon \frac{x_j+\dots+x_{j+n-1}}{n} \in \left[-\frac{1}{2},\frac{1}{4}\right) \right\} \cup$$$$\cup  \bigcup_{N \in \N} \bigcap_{j \in \N} \bigcap_{n \geq N} \left\{ x \in X \colon \frac{x_j+\dots+x_{j+n-1}}{n} \in \left(-\frac{1}{4},\frac{1}{2}\right] \right\}.$$
Take $N \in \N$. Consider the set $\bigcap_{j \in \N} \bigcap_{n \geq N} \left\{ x \in X \colon \frac{x_j+\dots+x_{j+n-1}}{n} \in \left[-\frac{1}{2},\frac{1}{4}\right) \right\}$. Observe that 
$$\bigcap_{j \in \N} \bigcap_{n \geq N} \left\{ x \in X \colon \frac{x_j+\dots+x_{j+n-1}}{n} \in \left[-\frac{1}{2},\frac{1}{4}\right) \right\} \subset \bigcap_{m \in \N} \left\{ x \in X \colon \frac{x_{(m-1)N+1}+\dots+x_{mN}}{N} \in \left[-\frac{1}{2},\frac{1}{4}\right) \right\}.$$
Define random variables $Y_j: X \to \R$ by the formula $Y_m((x_n)) = \frac{x_{(m-1)N+1}+\dots+x_{mN}}{N}$. Observe that $(Y_m)$ is an i.i.d. sequence. Put 
$$p = \mu\left(Y_1 < \frac{1}{4}\right) \in (0,1).$$ 
Then, using the independence of $(Y_m)$, we get $$\mu\left( \bigcap_{m \in \N} \left\{ x \in X \colon \frac{x_{(m-1)N+1}+\dots+x_{mN}}{N} \in \left[-\frac{1}{2},\frac{1}{4}\right) \right\} \right) = \mu\left(Y_1 < \frac{1}{4}, Y_2 < \frac{1}{4}, \dots\right)$$$$ = \mu\left(Y_1 < \frac{1}{4}\right)\cdot\mu\left(Y_2 < \frac{1}{4}\right)\cdot \dots = p \cdot p \cdot \dots = 0.$$
Hence also $$\mu\left(\bigcap_{j \in \N} \bigcap_{n \geq N} \left\{ x \in X  \colon \frac{x_j+\dots+x_{j+n-1}}{n} \in \left[-\frac{1}{2},\frac{1}{4}\right) \right\}\right) = 0.$$ A countable union of null sets is a null set, so
$$\mu\left(\bigcup_{N \in \N} \bigcap_{j \in \N} \bigcap_{n \geq N} \left\{ x \in X \colon \frac{x_j+\dots+x_{j+n-1}}{n} \in \left[-\frac{1}{2},\frac{1}{4}\right) \right\}\right) = 0.$$
Similarly we show that 
$$\mu\left(\bigcup_{N \in \N} \bigcap_{j \in \N} \bigcap_{n \geq N} \left\{ x \in X \colon \frac{x_j+\dots+x_{j+n-1}}{n} \in \left(-\frac{1}{4},\frac{1}{2}\right] \right\}\right) = 0.$$
Finally, $\mu(\widehat{c} \cap X) = 0$.
\end{proof}
\begin{corollary}
$\mu(S\cap X) = 1, \mu(c_0\cap X)=\mu(c\cap X) = \mu(\widehat{c_0} \cap X) = 0.$
\end{corollary}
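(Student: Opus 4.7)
The plan is to derive this corollary directly from Theorem \ref{m} by combining the chain of inclusions recalled in the introduction, namely $c_0\subset c\subset \widehat{c}$, $c_0\subset \widehat{c_0}\subset \widehat{c}$, and $S_0\subset S$, with the monotonicity of $\mu$ on measurable sets. No further probabilistic input is needed.

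For the three null-set assertions, I would proceed as follows. From the inclusions above one has
\[
c_0\cap X \;\subset\; c\cap X \;\subset\; \widehat{c}\cap X,\qquad \widehat{c_0}\cap X \;\subset\; \widehat{c}\cap X.
\]
Part (2) of Theorem \ref{m} gives $\mu(\widehat{c}\cap X)=0$, so once the three sets on the left are known to be $\mu$-measurable, monotonicity of $\mu$ forces $\mu(c_0\cap X)=\mu(c\cap X)=\mu(\widehat{c_0}\cap X)=0$. Measurability is routine: each of $c_0$, $c$, and $\widehat{c_0}$ is defined in $X$ by a countable Boolean combination of conditions involving only finitely many coordinates (for $c_0$ and $c$ via the standard $\varepsilon$-$N$ definition of convergence, and for $\widehat{c_0}$ via the Lorentz characterisation from Theorem \ref{t1}), so each is a Borel subset of $X$. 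Alternatively, since $\mu$ is complete, it suffices to observe that these three sets are all contained in a $\mu$-null set.

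For the first assertion, I would simply use $S_0\subset S$, whence $S_0\cap X\subset S\cap X$, and combine this with part (1) of Theorem \ref{m} and the fact that $\mu$ is a probability measure:
\[
1=\mu(S_0\cap X)\;\leq\;\mu(S\cap X)\;\leq\;1.
\]

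I do not expect any genuine obstacle here; the statement is essentially a bookkeeping consequence of Theorem \ref{m} and the inclusion lattice among $c_0,\,c,\,\widehat{c_0},\,\widehat{c},\,S_0,\,S$. The only minor point worth stating explicitly is the measurability of the sets on the left-hand side, which is immediate from their standard definitions.
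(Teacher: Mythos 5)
Your proposal is correct and is exactly the argument the paper intends: the corollary is stated without proof as an immediate consequence of Theorem \ref{m} together with the inclusions $c_0\subset c\subset\widehat{c}$, $\widehat{c_0}\subset\widehat{c}$ and $S_0\subset S$ recalled in the introduction, plus monotonicity of $\mu$. Your extra remark on measurability (or completeness) is a reasonable bit of diligence but does not change the route.
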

\begin{remark}
We will get the same results if we consider $X= (-a,a)^\N$ for $a >0$ with a product measure generated by the uniform distribution on $(-a,a)$.
\end{remark}
\begin{remark}
In the space $\R^{\N}$ we could consider a product measure generated by a normal distribution (with expected values equal to $0$) on $\R$. But then, the measure of $\ell^{\infty}$ would be equal to $0$. Indeed,
$$\mu(\ell^{\infty}) = \mu\left(\bigcup_{n \in \N} \left\{(x_i) \in \R^{\N} \colon \forall_{i \in \N} \; x_i \in (-n,n)\right\}\right) = 0.$$
However, we would still be able to show an analogue of Theorem \ref{m}(1).
\end{remark}

\section*{Acknowledgments}
The author would like to express his gratitude for Professor Jacek Jachymski for the inspiration and sugestions of simplifying some of the proofs, and for Professor Marek Balcerzak for many valuable remarks and advices.

\end{document}